\documentclass{article}


\usepackage{cite}
\usepackage{amsmath,amssymb,amsfonts}
\usepackage{algorithmic}
\usepackage{graphicx}
\usepackage{textcomp}
\usepackage{xcolor}
\usepackage{bbm}
\usepackage{amsthm}
\usepackage{subcaption}
\captionsetup{compatibility=false}
\usepackage{mwe}
\usepackage{url}
\usepackage{multirow}
\usepackage{float}
\usepackage{dsfont}


\newcommand\numberthis{\addtocounter{equation}{1}\tag{\theequation}}

\newcommand\pf{{\operatorname{p-f}}}

\newenvironment{thm1}[1]
	{\innercustomthm}
	{\endinnercustomthm}
	
\newtheorem{definition}{Definition}
\newtheorem{condition}{Condition}
\newtheorem{theorem}{Theorem}
\newtheorem{lemma}{Lemma}


\begin{document}

\title{Convergence of Conditional Entropy for Long Range Dependent Markov Chains} 
\author{Andrew Feutrill and Matthew Roughan}
\maketitle



\begin{abstract}
In this paper we consider the convergence of the conditional entropy to the entropy rate for Markov chains. Convergence of certain statistics of long range dependent processes, such as the sample mean, is slow. It has been shown in Carpio and Daley~\cite{carpio2007long} that the convergence of the $n$-step transition probabilities to the stationary distribution is slow, without quantifying the convergence rate. We prove that the slow convergence also applies to convergence to an information-theoretic measure, the entropy rate, by showing that the convergence rate is equivalent to the convergence rate of the $n$-step transition probabilities to the stationary distribution, which is equivalent to the Markov chain mixing time problem. Then we quantify this convergence rate, and show that it is $O(n^{2H-2})$, where $n$ is the number of steps of the Markov chain and $H$ is the Hurst parameter. Finally, we show that due to this slow convergence, the mutual information between past and future is infinite if and only if the Markov chain is long range dependent. This is a discrete analogue of characterisations which have been shown for other long range dependent processes.
\end{abstract}






\section{Introduction}

Long range dependence (LRD) is a phenomenon that is associated with strong correlations between the present and past values of a stochastic process. It is often defined via its second-order properties, such as covariance or spectral density. Let $\{X_n\}_{n \in \mathbb{Z}^+}$ be a stationary stochastic process, then the covariance function is defined as $\gamma(k) = \mathds{E}\left[\left(X_{n+k} - \mu\right)\left(X_n - \mu\right)\right]$, for $k \in \mathbb{Z}^+$, where $\mu$ is the expected value of the process. We define a stationary stochastic process to be LRD if the auto-covariance function is not summable, \emph{i.e.},
\begin{align}
\sum_{k=1}^{\infty} \gamma(k) &= \infty. \label{eqn:def_lrd}
\end{align}

An important quantity which describes the degree of long range dependence, is the Hurst parameter, $H$, defined by
\begin{align*}
H &= \inf \left\{h : \limsup\limits_{n \rightarrow \infty} \frac{\sum_{r=1}^{n} \gamma(r)}{n^{2h-1}} < \infty \right\}.
\end{align*}
For LRD processes, $H \in (\frac{1}{2}, 1)$, with $H = \frac{1}{2}$ being short-range dependent and $H = 1$ being a totally, positively-correlated process.

As a result of LRD, some quantities of interest, such as the sample mean, are difficult to estimate from observed data, due to the fact that the estimators converge slowly to the true value~\cite{beran1992statistical}. For example, the variance of the sample mean decreases at the rate $O(n^{2H-2})$ for LRD processes, and at the rate of $O(n^{-1})$ for non-LRD processes~\cite{beran1992statistical}, where $n$ is the number of samples. Slow convergence has been shown to characterise the behaviour of LRD processes for many quantities. 

LRD processes defined on discrete state spaces in discrete-time have been much less studied than on continuous state spaces but the concept was introduced into general point processes~\cite{daley1997long}, renewal~\cite{daley1999hurst} and Markov renewal processes~\cite{vesilo2004long}. We consider the question of whether the slow convergence applies to an information-theoretic measure, the entropy rate. Intuitively, this measures the average information gain from sampling a new random variable in a stochastic process and therefore we expect similar behaviour in its convergence rate and indeed we show this to be true here. 

Our contributions are as follows:
\begin{enumerate}
	\item We analyse the convergence of the conditional entropy to the entropy rate, and show that the convergence rate is the same as the Markov chain mixing time problem, that is, the convergence of the $n$-step transition probabilities to the stationary distribution.
	\item We quantify what slow convergence means for LRD Markov chains and prove that the convergence rate to the entropy rate is $O(n^{2H-2})$.
	\item  We show that this slow convergence rate provides a characterisation of LRD on Markov chains as having infinite mutual information between past and future. This can be interpreted as an infinite amount of information being shared between the past and future of the process.
\end{enumerate}

 This is similar to the work of Feutrill and Roughan~\cite{feutrill2021differential}, that showed that the conditional entropy converges to the entropy rate slower for LRD discrete-time Gaussian processes than their short-range-dependent counterparts. However, here we consider Markov chains on discrete state spaces which are more relevant to many real contexts, such as the analysis of natural language.

\section{Preliminaries and Main Results}

We briefly introduce some of the concepts required in the analysis of Markov chains and LRD processes. We begin with defining the Markov chain transition probabilities. Let $\{X_n\}_{n \in \mathbb{Z}^+}$ be an ergodic Markov chain with state space $S$. We denote the one-step transition probability between states $i$ and $j \in S$ by $p_{ij} = \mathds{P}(X_n = j | X_{n-1} = i)$, and the $n$-step transition probabilities, by  $p_{ij}^n = \mathds{P}(X_{k+n} = j | X_k = i)$, for $n \ge 0$.

In addition to the standard definition of LRD, given in (\ref{eqn:def_lrd}), a key insight for point and renewal processes is that LRD can be defined with respect to the second-order behaviour of the counting function of the number of events in an interval, $N(0,t]$. Then, a definition of LRD for point and renewal processes is given by the variance of this function. A point or renewal process is said to have LRD if the growth of the variance of the function is faster than linear, that is
\begin{align*}
\limsup \limits_{t \rightarrow \infty} \frac{\mbox{Var}(N(0,t])}{t} &= \infty.
\end{align*}
This was extended to the case of irreducible Markov chains in discrete time on countable state spaces by Carpio and Daley~\cite{carpio2007long}, by using the variance of the counting function of the number of visits of the Markov chain, $\{X_n\}_{n \in \mathbb{Z}^+}$, to state $i$ up until time $n$, $N_i(0,n]$, that is the variance of
\begin{align*}
N_i(0,n] = \frac{1}{n} \sum_{k = 1}^{n} \mathbbm{1}_{\{X_k = i\}}.
\end{align*}
This is a natural extension, since the rate of increase of the variance of the counting function is a property of the communicating class. This leads to the following definition of LRD on Markov chains.
\begin{definition}
	An irreducible and aperiodic Markov chain, $\{X_n\}_{n \in \mathbb{Z}^+}$, is said to be long range dependent if
	\begin{align*}
	\limsup \limits_{n \rightarrow \infty} \frac{\mbox{Var}(N_i(0,n])}{n} &= \infty.
	\end{align*}
	Otherwise, we say the Markov chain is short range dependent~\cite{carpio2007long}.
\end{definition}
Note that from Carpio and Daley~\cite{carpio2007long} that this applies to all states in the communicating class, and therefore is independent of the particular state $i$ used in the definition.

A random variable which characterises the long-term behaviour of Markov chains is the return time random variable to a particular state $i$, $T_{ii}$,which is defined as $T_{ii} = \inf\{n \ge 1: X_n = i, X_0 = i\}$. LRD Markov chains were shown, in Lemma 1 of Carpio and Daley~\cite{carpio2007long}, to have a return time random variable with an infinite second moment, providing a second useful characterisation of LRD that is used in the analysis in this paper. Oguz and Anantharan~\cite{ouguz2012hurst} extended this result to functions of Markov chains which inherit LRD, while preserving the Hurst parameter, given some additional regularity conditions on the function.

We define a condition that we will use throughout the paper, concerning the tail decay of the return time random variable.
\begin{condition}\label{cond:power_law_ccdf}
	The complementary cumulative distribution function of the return time random variable $T_{ii}$ has a power-law tail. That is, $\mathds{P}(T_{ii} > n) \sim n^{-\alpha}$, where $1 < \alpha < 2$.
\end{condition}

A related concept to infinite moments is that of heavy-tailed distributions, which are distributions whose tail decay is slower than exponential. 

\begin{definition}\label{def:heavy_tail}
	A probability distribution of a random variable, $X$, with cumulative distribution function, $F$, is called heavy-tailed if and only if $\forall t > 0$
	\begin{align*}
	\int_{-\infty}^{\infty} e^{t x} dF(x) = \infty.
	\end{align*}
\end{definition}
We will discuss some of the aspects of Markov chain stationary distribution convergence with reference to return time distributions with this property. Note that in this paper we will be considering \emph{discrete} random variables with heavy tails, and the integral will become an infinite sum.

We now define the quantity of interest in this paper, the \emph{entropy rate}. The entropy rate is the asymptotic limit of the average information from each additional observation of the Markov chain, and is used as a measure of uncertainty or complexity of a stochastic process.

\begin{definition}
	For a stochastic process, $\mathcal{X} = \{X_i\}_{i \in \mathbb{Z}^+}$, the entropy rate, is defined as,
	\begin{align*}
	\mathds{H}(\mathcal{X}) &= \lim\limits_{n \rightarrow \infty} \frac{1}{n} \mathds{H}(X_0, ... , X_{n-1}),
	\end{align*}
	when the limit exists, where $\mathds{H}(X_0, ... , X_{n-1})$ is the joint entropy of $X_0, \ldots, X_{n-1}$.
\end{definition}

Now we consider some additional information theoretic quantities, the excess entropy, which considers the ``excess" information that accrues in the convergence to the entropy rate from the conditional entropy and the mutual information between past and future. It measures the amount of information that is shared between the infinite past and infinite future of processes. For processes on countable sets, the excess entropy was shown to be equivalent to the mutual information between past and future~\cite{crutchfield_feldman_2003}. We define both here and show that in the case of LRD Markov chains that these two measures are infinite, providing another characterisation of LRD. Note that we denote the excess entropy, $E$, in line with previous work and use the expectation operator $\mathds{E}[\cdot]$.

\begin{definition}
	The \emph{excess} entropy, $E$, of a stochastic process, $\{X_i\}_{i \in \mathbb{Z}^+}$, is defined as,
	\begin{align*}
	E &= \sum_{n=1}^{\infty} \Big(\mathds{H}[X_n | X_{n-1}, \ldots, X_0] - \mathds{H}(\mathcal{X})\Big),
	\end{align*}
	where $\mathds{H}[X_n|\cdot]$ is the conditional entropy.
\end{definition}

We also define the mutual information between past and future. 

\begin{definition}
	The mutual information between past and future, $I_\pf$, is defined as 
	\begin{align*}
	I_\pf &= I(\{X_s, s < 0\}, \{X_s, s \ge 0\}),
	\end{align*}
	where $I(\cdot, \cdot)$ is the mutual information.
\end{definition}
This measures the amount of shared information between the infinite future and the infinite past of a stochastic process.

In addition to the concepts above we need a notion of distance between probability mass functions. Specifically, the distance between the $n$-step transition probabilities and the stationary distribution. Here we use the total variation norm.
\begin{definition}
	The total variation distance between two probability distributions, $\mu$ and $\nu$ on a support, $S$, with an associated sigma algebra, $\mathcal{F}$ is defined as
	\begin{align*}
	d(\mu, \nu) = ||\mu - \nu||_{TV} &= \sup_{A \in \mathcal{F}} |\mu(A) - \nu(A)|.
	\end{align*}
\end{definition}
This definition gives the total variation as the maximum difference between the two probability distributions across all possible events. This is linked to the $L^1$-norm by
\begin{align*}
2|\mu - \nu| = ||\mu - \nu||_{TV},
\end{align*}
for all distributions~\cite[pg. 314]{meyn_tweedie_glynn_2009}.
An extension of this distance is called the $f$-norm, which is used in the statements of more general convergence theorems where convergence is of the quantity $f\left(X_n\right)$ to its mean value, given an arbitrary function $f : S \rightarrow [1, \infty)$ of the states.
\begin{definition}
	The $f$-norm of two probability distributions, $\mu$ and $\nu$ on a support $S$ is defined as
	\begin{align*}
	||\mu - \nu||_{f} &= \sup_{g: |g| \le f} |\mu(g) - \nu(g)|,
	\end{align*}
	where $\mu(g) = \sum_{i \in S} \mu(i)g(i)$, for an arbitrary function $g$. 
\end{definition}
Note that the definition here applies to any function $g$ that is dominated by $f$.
The total variation and f-norms are equivalent using the function $f=1$~\cite{tuominen_tweedie_1994}.


We now summarise the main results of the paper, three theorems describing the convergence behaviour of LRD Markov chains and the impact on information theoretic quantities. 

The first theorem concerns the convergence rate of the $n$-step transition probabilities to the stationary distribution of LRD Markov chains, on a state space $S$, that have a power-law decaying complementary cumulative distribution function for the return time, $T_{ii}$.

\begin{theorem}\label{thm:power_law_markov_chain_convergence}
	Let $\mathcal{X} = \{X_n\}_{n \in \mathbb{Z}^+}$ be an LRD Markov chain with Condition~\ref{cond:power_law_ccdf}. Then, the rate of convergence of the $n$-step transition probabilities to the stationary distribution $\pi$ is $O(n^{2H-2})$. That is, for any $0 < \beta < H$ and $\forall i \in S$
	\begin{align*}
	n^{2-2\beta} || p^n_{i \cdot} - \pi\left(\cdot\right) ||_{TV} \rightarrow 0.
	\end{align*}
\end{theorem}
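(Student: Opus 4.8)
The plan is to reduce the statement to a tail estimate for the return time $T_{ii}$ and then feed that estimate into a subgeometric (polynomial) ergodicity theorem. The first step is to make the bridge between the two parametrisations explicit: Condition~\ref{cond:power_law_ccdf} is phrased through the tail index $\alpha$, whereas the target rate is phrased through the Hurst parameter $H$. I would first record the identity $2H-2 = 1-\alpha$, equivalently $\alpha = 3-2H$. This comes from the renewal-theoretic computation underlying the LRD definition: when $\mathds{P}(T_{ii}>n)\sim n^{-\alpha}$ with $1<\alpha<2$, the successive visits to state $i$ form a renewal process whose counting function satisfies $\mathrm{Var}(N_i(0,n]) \sim c\,n^{3-\alpha}$, so matching the variance-growth exponent $2H$ against $3-\alpha$ pins down $H=(3-\alpha)/2$. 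I would cite Carpio and Daley~\cite{carpio2007long} for the variance asymptotics, and Lemma~1 there for the infinite second moment that guarantees $\alpha<2$.

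With $\alpha$ fixed, the core is a convergence-rate theorem for an atomic, aperiodic, positive-recurrent chain. I would exploit the fact that an irreducible chain visiting a recurrent state $i$ regenerates at each visit, so state $i$ is an accessible atom and the inter-return times are i.i.d.\ copies of $T_{ii}$. Two routes are available. The first is a direct coupling bound, $\|p^n_{i\cdot}-\pi\|_{TV}\le \mathds{P}(\tau>n)$, where $\tau$ is the coupling time of a chain started at $i$ with a stationary copy; the tail of $\tau$ is governed by the stationary forward-recurrence time, whose tail is $\mathds{P}(R>n)=\mu^{-1}\sum_{k>n}\mathds{P}(T_{ii}>k)\sim c\,n^{1-\alpha}=c\,n^{2H-2}$, giving the rate immediately. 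The second, which matches the $f$-norm/total-variation framework set up above, is to apply the Tuominen--Tweedie subgeometric ergodicity theorem~\cite{tuominen_tweedie_1994}: for a rate function $r$, the summability $\sum_{n} r(n)\,\mathds{P}_i(T_{ii}\ge n)<\infty$ yields $r(n)\,\|p^n_{i\cdot}-\pi\|_{TV}\to 0$. I would take $r(n)=n^{2-2\beta}$ and reduce the hypothesis to $\sum_n n^{2-2\beta}n^{-\alpha}<\infty$, which holds exactly when the rate exponent $2-2\beta$ sits strictly below the critical value $\alpha-1=2-2H$; letting $\beta\to H$ then exhausts the sharp rate $O(n^{2H-2})$ and delivers the displayed convergence for every admissible $\beta$.

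The most delicate step is verifying the hypotheses of the ergodicity theorem in the countable-state setting and translating them cleanly into the moment/summability condition. Concretely I expect three points to require care: (i) passing from the pointwise asymptotic $\mathds{P}(T_{ii}>n)\sim n^{-\alpha}$ to the exact convergence of $\sum_n n^{2-2\beta}\mathds{P}(T_{ii}>n)$, including control of the implicit constants so that the threshold $\beta=H$ (where the series diverges) is handled by staying strictly inside the admissible range; (ii) ensuring the conclusion holds uniformly enough over the initial state to be stated for all $i\in S$, which follows from irreducibility together with the fact that the LRD tail index is a class property; and (iii) confirming aperiodicity so that the renewal sequence $u_n=\mathds{P}(X_n=i\mid X_0=i)$ genuinely converges to $\pi_i=1/\mathds{E}[T_{ii}]$ rather than oscillating. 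The renewal-theoretic identification of the rate with $n^{2H-2}$ in the first step is routine once the variance asymptotics are cited; the genuine work lies in the subgeometric ergodicity input, which is where I would concentrate the argument.
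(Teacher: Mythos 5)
Your proposal follows essentially the same route as the paper's proof: the Tuominen--Tweedie subgeometric ergodicity theorem~\cite{tuominen_tweedie_1994} with $f=1$ and polynomial rate function, the Tonelli reduction of the moment condition to the summability of $\sum_n r(n)\,\mathds{P}(T_{ii}>n)$ (the paper's Lemma~\ref{alternate_condition}), and the Carpio--Daley/Daley relation $H=(3-\alpha)/2$ converting the tail index into the Hurst parameter (the paper cites~\cite{carpio2007long,daley1999hurst} for this rather than re-deriving the variance asymptotics as you do). Note that your admissible range --- rate exponent strictly below $\alpha-1=2-2H$, i.e.\ $\beta>H$ --- is exactly what the paper's Lemma~\ref{lem:power_law_markov_chain_convergence} delivers, so the inequality $0<\beta<H$ displayed in the theorem statement appears to have its direction reversed, and your derivation implicitly corrects this.
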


The next result gives the link between the convergence rate of the $n$-step transition probabilities to the stationary distribution.

\begin{theorem}\label{thm:stationary_entropy_rate_convergence}
	The convergence of the conditional entropy of an ergodic, positive recurrent Markov Chain to its entropy rate is at the same rate as the convergence to the stationary distribution.
\end{theorem}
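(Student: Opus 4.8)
The plan is to use the Markov property to collapse the conditioning and thereby reveal that the conditional entropy is an \emph{affine} functional of the one-dimensional marginal law of the chain, so that its gap from the entropy rate is governed exactly by how far that marginal has drifted from stationarity. Write $h_i = -\sum_{j \in S} p_{ij} \log p_{ij}$ for the entropy of the one-step transition distribution out of state $i$, and let $\nu_n$ denote the law of $X_n$ (so $\nu_n(j) = p^n_{i_0 j}$ when the chain is started deterministically at $X_0 = i_0$). First I would note that by the Markov property $\mathds{H}[X_n \mid X_{n-1}, \ldots, X_0] = \mathds{H}[X_n \mid X_{n-1}]$, and that conditioning on $X_{n-1}=i$ contributes exactly $h_i$, so averaging over the marginal yields
\begin{align*}
\mathds{H}[X_n \mid X_{n-1}, \ldots, X_0] = \sum_{i \in S} \nu_{n-1}(i)\, h_i = \nu_{n-1}(h).
\end{align*}
Since the entropy rate of an ergodic, positive-recurrent chain is $\mathds{H}(\mathcal{X}) = \sum_{i \in S} \pi_i h_i = \pi(h)$, the convergence gap is the \emph{linear} functional of the deviation from stationarity,
\begin{align*}
\mathds{H}[X_n \mid X_{n-1}, \ldots, X_0] - \mathds{H}(\mathcal{X}) = (\nu_{n-1} - \pi)(h).
\end{align*}

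The second step is to bound this gap above by the distance of $\nu_{n-1}$ to $\pi$. When the transition entropies are uniformly bounded (in particular on a finite state space), I would invoke the equivalence of the total-variation and $L^1$ distances recorded above to obtain $|(\nu_{n-1}-\pi)(h)| \le (\sup_i h_i)\sum_{j \in S} |\nu_{n-1}(j) - \pi_j|$, which is a constant multiple of $\|p^{n-1}_{i_0 \cdot} - \pi\|_{TV}$; this already shows the entropy gap converges at least as fast as the $n$-step transition probabilities. To cover countable state spaces with unbounded $h$, I would instead read $(\nu_{n-1}-\pi)(h)$ as the action appearing in the $f$-norm definition, with test function $g = h$ dominated by $f = \max(1, |h|)$, so that $|(\nu_{n-1}-\pi)(h)| \le \|\nu_{n-1}-\pi\|_f$; the $f$-norm convergence theorems cited above then transfer the decay order, provided the chain is $f$-ergodic for this $f$.

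The reverse direction, that the entropy gap \emph{matches} rather than merely dominates the distributional rate, is the delicate step, because the deviation $\nu_{n-1}-\pi$ could in principle be annihilated by $h$; indeed if all $h_i$ coincide the gap vanishes identically and the statement degenerates. I would therefore argue that, outside this degenerate case of constant transition entropy, the functional $(\nu_{n-1}-\pi)(h)$ keeps a nonzero projection onto the slowest-decaying component of $\nu_{n-1}-\pi$. The cleanest route is to identify the dominant contribution to $\nu_{n-1}-\pi$ with the return-time/renewal structure used by Carpio and Daley, whose leading asymptotics are state-independent up to the stationary weights, and to verify that $\sum_{j \in S}(\nu_{n-1}(j)-\pi_j)\,h_j$ inherits the same leading order instead of cancelling. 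I expect this non-cancellation step to be the main obstacle: establishing it rigorously amounts to ruling out systematic cancellation in a weighted sum, which is precisely why the theorem is phrased as an equivalence of convergence \emph{rates} and not of constants.

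Combining the upper bound with the non-cancellation argument gives that $\mathds{H}[X_n \mid X_{n-1},\ldots,X_0] \to \mathds{H}(\mathcal{X})$ at exactly the rate at which $p^{n-1}_{i_0 \cdot} \to \pi$, which is the assertion of the theorem. Feeding Theorem~\ref{thm:power_law_markov_chain_convergence} into this equivalence then transfers the $O(n^{2H-2})$ bound to the entropy in the long range dependent case, as required for the subsequent results on the excess entropy and $I_\pf$.
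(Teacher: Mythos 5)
Your opening decomposition is precisely the paper's own proof. The paper uses the Markov property to derive equation~(\ref{eqn:conditional_entropy_from_pre_limit}), which is exactly your identity $\mathds{H}[X_n \mid X_{n-1},\ldots,X_0] = \nu_{n-1}(h)$ (up to sign and indexing typos), notes that the entropy rate is $\pi(h)$, and concludes by letting $n \rightarrow \infty$: the gap is the linear functional $(\nu_{n-1}-\pi)(h)$, so its decay is governed by the decay of $\nu_{n-1}-\pi$. That is the entire published argument. Your second step --- the uniform bound via $\sup_i h_i$, or the $f$-norm reading with $f = \max(1,|h|)$ for unbounded transition entropies --- is a genuine tightening that the paper omits; without something of that kind even the domination direction is not rigorous on a countable state space, since a linear functional of a TV-convergent sequence of measures need not converge when the test function is unbounded.

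On the reverse direction: the obstacle you flag is real, and the paper's proof of this theorem does not address it either --- what is actually proved there is only that the entropy gap is controlled by the distributional gap, and the phrase ``at the same rate'' is used loosely. Your degenerate example (all $h_i$ equal, so $(\nu_{n-1}-\pi)(h) \equiv 0$ while $\| \nu_{n-1}-\pi \|_{TV}$ may decay arbitrarily slowly) is a genuine caveat to the literal statement. Where a matching lower bound is actually needed downstream (Lemma~\ref{lem:excess_infinite}, LRD iff infinite excess entropy), the paper closes the gap by exactly the route you sketch: Lemma 3 of Carpio and Daley makes the partial sums $Q^n_{ij} = \sum_{r=1}^{n}\left(p^r_{ij}-\pi_j\right)$ grow at a common rate $f(n)$ across all state pairs, so the weighted sum collapses to $f(n)\sum_j \pi_j C_j$ with nonnegative weights, and same-sign terms cannot cancel. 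So your instinct about both the location of the difficulty and its resolution via the state-independent renewal asymptotics is correct; the only miscalibration is expecting the paper's proof of this theorem to contain that resolution --- it does not, and in that sense your proposal is at least as complete as the published one.
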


We now present our final result, which gives a characterisation of LRD on Markov chains that is similar to that given by Feutrill and Roughan~\cite{feutrill2021differential} for Gaussian processes.

\begin{theorem}\label{thm:mutual_infinite}
	The mutual information between past and future of a Markov chain is infinite if and only if the Markov chain is LRD.
\end{theorem}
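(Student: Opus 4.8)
The plan is to reduce the statement to the summability of a series of conditional-entropy gaps and then tie that summability to the long-range-dependence condition. First I would invoke the equivalence of the excess entropy and the mutual information between past and future on countable state spaces~\cite{crutchfield_feldman_2003}, so that $I_\pf = E = \sum_{n=1}^{\infty}\big(\mathds{H}[X_n \mid X_{n-1},\ldots,X_0] - \mathds{H}(\mathcal{X})\big)$. The key structural observation is that each summand is nonnegative, since conditioning cannot increase entropy and the finite-history conditional entropies decrease monotonically to the entropy rate. Thus the question ``is $I_\pf$ infinite?'' becomes ``does this series of nonnegative terms diverge?''

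Next I would apply Theorem~\ref{thm:stationary_entropy_rate_convergence}, which states that the conditional entropy converges to the entropy rate at the same rate as the $n$-step transition probabilities converge to $\pi$. This lets me replace the $n$-th summand by a term comparable to $|| p^n_{i\cdot} - \pi(\cdot) ||_{TV}$, so that $I_\pf < \infty$ if and only if $\sum_{n} || p^n_{i\cdot} - \pi(\cdot) ||_{TV} < \infty$. For the ``if LRD then infinite'' direction I would then use Theorem~\ref{thm:power_law_markov_chain_convergence}: under Condition~\ref{cond:power_law_ccdf} the terms decay like $n^{2H-2}$, and since $H \in (\tfrac{1}{2}, 1)$ forces $2H-2 > -1$, the comparison series $\sum_n n^{2H-2}$ diverges, giving $I_\pf = \infty$. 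For the converse I would argue the contrapositive: a short-range-dependent chain has a return-time random variable with finite second moment (Carpio and Daley~\cite{carpio2007long}), which yields a convergence rate with exponent strictly below $-1$, so the series converges and $I_\pf < \infty$.

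The main obstacle is that Theorem~\ref{thm:power_law_markov_chain_convergence} supplies only an \emph{upper} bound $O(n^{2H-2})$ on $|| p^n_{i\cdot} - \pi(\cdot) ||_{TV}$, whereas divergence of the series requires a matching \emph{lower} bound on the summands. I would close this gap by appealing to the variance characterisation of LRD: by definition an LRD chain satisfies $\limsup_n \mathrm{Var}(N_i(0,n])/n = \infty$, and the asymptotic per-step variance of the counting function is finite precisely when $\sum_n (p^n_{ii} - \pi_i)$ converges. Hence long-range dependence forces this sum to diverge, supplying the lower bound needed to conclude that the nonnegative entropy series diverges, while short-range dependence makes it converge. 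Care is also needed to confirm that the ``same rate'' of Theorem~\ref{thm:stationary_entropy_rate_convergence} is two-sided, so that summability genuinely transfers between the entropy gaps and the total-variation distances.
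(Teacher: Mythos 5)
Your proposal assembles the right external ingredients --- the Crutchfield--Feldman identification $I_\pf = E$ and the Carpio--Daley characterisation of LRD through divergence of $\sum_n \left(p^n_{ii} - \pi_i\right)$ --- and these are indeed the two pillars of the paper's own proof, which simply combines Lemma~\ref{lem:excess_infinite} with Proposition 8 of~\cite{crutchfield_feldman_2003}. However, the bridge you build between them has a genuine gap. You propose to replace the $n$-th entropy gap by a quantity ``comparable to'' $|| p^n_{i\cdot}-\pi\left(\cdot\right)||_{TV}$ using Theorem~\ref{thm:stationary_entropy_rate_convergence}, and then to transfer summability in both directions. But Theorem~\ref{thm:stationary_entropy_rate_convergence} is qualitative, and the entropy gap is actually the signed, weighted sum $\sum_i C_i\bigl(\sum_k \alpha_k p^n_{ki}-\pi_i\bigr)$ with $C_i = -\sum_j p_{ij}\log p_{ij}$; terms of opposite sign across states can cancel, so divergence of $\sum_n \left(p^n_{ii}-\pi_i\right)$ does not by itself lower-bound the entropy series. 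This is precisely the hole the paper closes in Lemma~\ref{lem:excess_infinite}: it writes the partial sums $E(n)$ exactly in terms of $Q^n_{ij}=\sum_{r=1}^n \left(p^r_{ij}-\pi_j\right)$, sandwiches the $\alpha$-average between extreme states, and invokes Lemma 3 of Carpio and Daley~\cite{carpio2007long} (all $Q^n_{ij}$ share a common growth rate $f(n)$) to obtain $E(n)\sim f(n)\sum_j \pi_j C_j$ with a nonnegative coefficient, which rules out cancellation. Without this, or an equivalent uniformity argument, your closing caveat that ``care is needed'' to make the rate two-sided is not a loose end but the unproved crux.

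Two further steps are flawed as stated. First, your converse --- short-range dependence gives a finite second moment of $T_{ii}$, hence ``a convergence rate with exponent strictly below $-1$'' --- does not follow: a finite second moment yields, via Theorem~\ref{thm:generic_rate_convergence} with $f=1$ and $r(n)=n$, only $|| p^n_{i\cdot}-\pi\left(\cdot\right)||_{TV}=o(n^{-1})$, and an $o(n^{-1})$ sequence need not be summable (consider $1/(n\log n)$); the correct route for the converse is again the $Q$-characterisation rather than a moment-to-exponent argument. Second, the nonnegativity framing is vacuous for Markov chains: under stationarity the Markov property gives $\mathds{H}[X_n | X_{n-1}, \ldots, X_0]=\mathds{H}[X_1 | X_0]$, which equals the entropy rate, so every summand in your series would be identically zero. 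The nontrivial content of the excess-entropy computation comes from running the chain from a non-stationary initial distribution $\alpha$, as the paper does in Lemma~\ref{lem:excess_infinite}, and in that setting the monotone decrease of conditional entropies --- and hence nonnegativity of the summands --- is no longer available.
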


\section{Convergence to the Stationary Distribution for Markov Chains}
In this section we  introduce some of the main concepts and relevant results regarding the convergence of the limit of $n$-step transition probabilities to the stationary distribution for Markov chains. This subject has been well studied, in particular conditions where the Markov chain converges at a geometric rate, \emph{i.e.,} decays as $\rho^n$ for some $\rho$ such that $0<\rho<1$, are well known. It was also noted in Carpio and Daley~\cite{carpio2007long} that the convergence of the $n$-step transitions probabilities to the stationary distribution is ``slow" for LRD processes. 

A classic theorem, Theorem~\ref{thm:finite_convergence} below, classifies the convergence rate of all finite state Markov chains. Characterising the convergence of finite-state Markov chains is simpler than the countable state case, as every recurrent Markov chain is positive recurrent and all moments of the return time are finite~\cite[Theorem 7.3.1]{hunter2014mathematical}. 
\begin{theorem}[Theorem 4.9~\cite{levin2017markov}]\label{thm:finite_convergence}
	For an ergodic Markov chain on a finite state space $S$ with stationary distribution $\pi$ and probability transition matrix $P$, there exists $\alpha \in (0,1)$ and $C > 0$ such that $\forall i, j \in S$
	\begin{align*}
	\max_{i \in S} || p^n_{ij} - \pi_j ||_{TV} \le C \alpha^n.
	\end{align*}
\end{theorem}

Geometric convergence for Markov chains on countably infinite state spaces requires more conditions, since moments of the return time can be infinite. An important concept, introduced by Kendall~\cite{kendall1959unitary}, is the following.

\begin{definition}[Geometric Ergodicity]
	A Markov chain, $\{X_n\}_{n \in \mathbb{Z}^+}$, is geometrically ergodic if there exist numbers $c_i, \pi_i$ and $0 \le \rho_i < 1$ for every state $i \in S$ such that $\forall n \in \mathbb{Z}^+$
	\begin{align*}
		||p_{ii}^n - \pi_i||_{TV} &\le c_i\rho_i^n.
	\end{align*}
\end{definition}


Geometric ergodicity implies a fast convergence rate, as it can be bounded by an exponentially decaying function. However, LRD processes have heavy tailed probability distributions of return times, with infinite moments, which we show in this section precludes geometric convergence. 


An extension of Theorem~\ref{thm:finite_convergence} has been developed for Markov chains on general, not necessarily countable, state spaces. It requires additional definitions to state its conditions.  First we will define the sampled chain of a Markov chain. 

\begin{definition}
	Let $a = \{a_n\}_{n \in \mathbb{Z}^+}$ be a distribution, then we define the sampled chain of a Markov chain $\{X_n\}_{n \in \mathbb{Z}^+}$ for a state $i$ and a subset of the state space, $A$, to be
	\begin{align*}
	K_a(i, A) &= \sum_{n=0}^{\infty} p^n_{iA} a_n,
	\end{align*}
	where $p^n_{iA}$ is the $n$-step transition probability of moving from state $i$ to a subset $A$.
\end{definition}

\noindent Now we define the concept of a petite set.

\begin{definition}
	A set $C$ is called petite if the sampled chain satisfies the following bound
	\begin{align*}
	K_a(i, A) \ge \nu(A),
	\end{align*}
	for all $i \in C$ and for all subsets $A$, and for a non-trivial measure $\nu$, that is $\nu(A) \neq 0$.
\end{definition}

\noindent When $S$ is countable, every state $i \in S$ forms a singleton petite set and we use these results from general state spaces to refer to petite sets of a single countable state.




The following theorem summarises some important implications and characterisations of geometric ergodicity. The notation $p^\infty(C)$ is the limiting probability of being in a subset $C$.

\begin{theorem}[Geometric Ergodic Theorem{~\cite[Theorem 15.0.1]{meyn_tweedie_glynn_2009}}]\label{thm:geometric_convergence_general}
	For an ergodic Markov chain, $\{X_n\}_{n \in \mathbb{Z}^+}$, on a countable state space, the following conditions are equivalent:
	\begin{enumerate}
		\item The chain $\{X_n\}_{n \in \mathbb{Z}^+}$ is positive recurrent with a stationary distribution and there exists a petite set $C, 0 < \rho_C < 1$, $0 < M_C < \infty$ and $p^\infty(C) > 0$, such that for all $i \in C$
		\begin{align*}
			|p^n_{i, C} - p^\infty(C)| \le M_C \rho_C^n.
		\end{align*}
		\item There exists some petite set $C$ and $\gamma>1$ for all $i \in C$ such that
		\begin{align*}
			\sup_{i \in C} \mathds{E}_i[\gamma^{T_{ii}}] < \infty,
		\end{align*}
		where $\mathds{E}_i[\cdot] = \mathds{E}[\cdot| X_0 = i]$. \label{thm:part_2_geometric_ergodicity_general}
	\end{enumerate}
\end{theorem}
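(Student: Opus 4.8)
The plan is to reduce the general petite-set statement to the single-state case, which is available on a countable state space because every singleton $\{i\}$ is petite (as noted above), and then to exploit the regenerative structure of the chain through generating functions. Writing $f_{ii}^{(n)} = \mathds{P}(T_{ii} = n \mid X_0 = i)$ for the first-return probabilities and $p_{ii}^n$ for the $n$-step return probabilities, the two are tied together by the renewal equation, whose generating-function form reads
\begin{align*}
F(z) = \mathds{E}_i[z^{T_{ii}}] = \sum_{n=1}^{\infty} f_{ii}^{(n)} z^n, \qquad P(z) = \sum_{n=0}^{\infty} p_{ii}^n z^n, \qquad P(z) = \frac{1}{1 - F(z)}.
\end{align*}
I would translate both conditions in the theorem into statements about the radius of analyticity of these power series, after which the equivalence follows from the algebraic relation between $F$ and $P$.

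For the direction \ref{thm:part_2_geometric_ergodicity_general} $\Rightarrow$ (1), suppose $\mathds{E}_i[\gamma^{T_{ii}}] = F(\gamma) < \infty$ for some $\gamma > 1$. Then $F$ is analytic on the disk $\{|z| < \gamma\}$, and by positive recurrence $F(1) = 1$ with $F'(1) = \mathds{E}_i[T_{ii}] < \infty$. Since the chain is aperiodic, $z = 1$ is the unique zero of $1 - F(z)$ on the closed unit disk; a continuity argument then extends this to a disk of some radius $r \in (1, \gamma)$ on which $z = 1$ remains the only zero. Hence $P(z) = 1/(1 - F(z))$ is meromorphic on $\{|z| < r\}$ with a single simple pole at $z = 1$, and the local expansion $F(z) \approx 1 + F'(1)(z-1)$ gives principal part $\pi_i/(1-z)$, where $\pi_i = 1/\mathds{E}_i[T_{ii}]$. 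Subtracting it, $P(z) - \pi_i/(1-z)$ is analytic on $\{|z| < r\}$, so its Taylor coefficients $p_{ii}^n - \pi_i$ are bounded by $M \rho^n$ for any $\rho \in (1/r, 1)$, which is the geometric bound in condition (1).

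For the converse, (1) $\Rightarrow$ \ref{thm:part_2_geometric_ergodicity_general}, the bound $|p_{ii}^n - \pi_i| \le M \rho^n$ shows that $P(z) - \pi_i/(1-z) = \sum_{n} (p_{ii}^n - \pi_i) z^n$ converges on $\{|z| < 1/\rho\}$, so $P$ is meromorphic there with a simple pole only at $z = 1$. Consequently $F(z) = 1 - 1/P(z)$ admits an analytic continuation to a neighbourhood of the closed unit disk: the pole of $P$ at $z = 1$ makes $1/P$ removable there, and $P$ has no zeros in this region because $p_{ii}^n \ge 0$ forces $|P(z)| \le P(|z|)$ to stay bounded away from $0$. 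Thus $F$ has radius of convergence strictly greater than $1$, which is precisely the assertion that $\mathds{E}_i[\gamma^{T_{ii}}] = F(\gamma) < \infty$ for some $\gamma > 1$.

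The main obstacle is the control of the zeros of $1 - F(z)$ off the real axis: establishing that $z = 1$ is the only singularity of $P$ inside a disk of radius strictly exceeding one requires aperiodicity together with a careful boundary argument, since a priori $F$ could equal $1$ at other points near the unit circle. I would handle this by first ruling out zeros on the circle itself, using $|F(z)| < 1$ for $|z| = 1$, $z \ne 1$ (which follows from aperiodicity and $\sum_n f_{ii}^{(n)} = 1$), and then invoking continuity of the zero set to obtain a zero-free annulus $1 < |z| < r$. The passage from the single-state reduction back to the general petite-set formulation is routine on a countable state space, since for an irreducible chain geometric ergodicity at one state propagates to all states via the finite-step bounds relating $p_{ij}^n$ to $p_{ii}^n$; in the genuinely uncountable setting of the cited result this step would instead demand the Nummelin splitting construction and a Foster–Lyapunov geometric-drift criterion, which is the route taken in the source.
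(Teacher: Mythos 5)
The paper offers no proof of this statement at all: it is imported verbatim, with a citation, from Theorem 15.0.1 of Meyn, Tweedie and Glynn, whose proof for general state spaces runs through the Nummelin splitting and a geometric drift condition, exactly as you anticipate in your closing remarks. Your generating-function route is the classical Kendall (1959) argument for countable chains, and since the paper only ever invokes the theorem for singleton petite sets, it is a legitimate and more elementary alternative. Your direction (2) $\Rightarrow$ (1) is essentially sound, with one inaccuracy worth noting: your claim that $|F(z)| < 1$ for $|z| = 1$, $z \neq 1$ is false in general --- if the support of the first-return distribution is $\{3,5\}$ then $F(-1) = -1$, so $|F(-1)| = 1$ --- but all you need is $F(z) \neq 1$ off $z=1$, which follows from the equality case of the triangle inequality (real part of $z^n$ must equal $1$ on the support) together with aperiodicity and B\'ezout; the rest (isolated zeros giving a zero-free radius $r > 1$, the simple pole with principal part $\pi_i/(1-z)$ via $F'(1) = 1/\pi_i$, Cauchy estimates) is correct.

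The genuine gap is in (1) $\Rightarrow$ (2), at the step where you need the continuation of $P$ to be zero-free near the closed unit disk. Your justification --- ``$p_{ii}^n \ge 0$ forces $|P(z)| \le P(|z|)$ to stay bounded away from $0$'' --- does not work: $|P(z)| \le P(|z|)$ is an \emph{upper} bound and cannot keep $|P|$ away from zero, and $P(|z|)$ is not even defined by the series for $|z| \ge 1$, since $p_{ii}^n \rightarrow \pi_i > 0$ makes $\sum_n p_{ii}^n z^n$ diverge everywhere on the unit circle; only the continuation $\pi_i/(1-z) + \sum_n \left(p_{ii}^n - \pi_i\right) z^n$ makes sense there. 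The correct argument is: inside the open disk $P = 1/(1-F)$ is automatically nonvanishing, being a reciprocal; if the continuation of $P$ vanished at some $z_0$ with $|z_0| = 1$, $z_0 \neq 1$, then $1 - F(z) = 1/P(z) \rightarrow \infty$ as $z \rightarrow z_0$ from inside the disk, contradicting $|1 - F(z)| \le 2$ on the closed unit disk; zero-freeness on an annulus $1 < |z| < r$ then follows from isolation of zeros, and positivity of the coefficients (Pringsheim-type reasoning) converts the radius of convergence exceeding $1$ of the continued $F$ into $\mathds{E}_i[\gamma^{T_{ii}}] < \infty$ for $1 < \gamma < r$. Separately, your dismissal of the petite-set reduction as ``routine'' understates a real step: condition (1) for a general petite set $C$ controls only the aggregated quantity $p^n_{i,C} - p^\infty(C)$, and geometric decay of this aggregate does not immediately yield geometric ergodicity at any individual state; this solidarity argument is nontrivial, and the paper sidesteps it only because it applies the theorem exclusively with singleton petite sets, while the full generality of the statement genuinely requires the machinery of the cited source.
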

Note that we have removed some equivalent conditions that are irrelevant to our discussion.



 
Part~\ref{thm:part_2_geometric_ergodicity_general} of Theorem~\ref{thm:geometric_convergence_general} is a condition on the radius of convergence of the probability generating function of the return time random variable. When considering sets consisting of a single point, it reduces to a condition on the return time distribution. We define the probability generating function of the return time distribution as
\begin{align*}
F_{ii}(z) = \sum_{n=1}^{\infty} \mathds{P}(T_{ii}  = n) z^n = \mathds{E}[z^{T_{ii}}].
\end{align*}
for $z \in \mathbb{C}$. By Theorem~\ref{thm:geometric_convergence_general}, if the radius of convergence of $F_{ii}(z)$ is greater than 1, then the chain is geometrically ergodic. For a positive recurrent Markov chain, the radius of convergence is at least 1 since the return time is finite almost surely, and hence $\sum_{n=1}^{\infty} \mathds{P}(T_{ii}  = n)  = 1$.


\begin{lemma}\label{infinite_moment}
	The return time distribution of a Markov chain is heavy-tailed if and only if the convergence to the stationary distribution is slower than geometric.
\end{lemma}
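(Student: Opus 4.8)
The plan is to establish the lemma through a chain of equivalences linking heavy-tailedness of the return time $T_{ii}$, the radius of convergence of its probability generating function $F_{ii}(z)$, and the exponential-moment condition of Part~\ref{thm:part_2_geometric_ergodicity_general} of Theorem~\ref{thm:geometric_convergence_general}. First I would unwind Definition~\ref{def:heavy_tail} in the discrete setting. Writing $\gamma = e^{t}$, the heavy-tail condition $\sum_{n=1}^{\infty} e^{tn}\,\mathds{P}(T_{ii}=n) = \infty$ for all $t>0$ is precisely the statement that $\mathds{E}[\gamma^{T_{ii}}] = F_{ii}(\gamma) = \infty$ for every real $\gamma > 1$, i.e., that the radius of convergence of $F_{ii}$ is at most $1$. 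Since the chain is positive recurrent, the return time is almost surely finite, so $F_{ii}(1) = \sum_{n} \mathds{P}(T_{ii}=n) = 1$ and the radius of convergence is at least $1$. Hence heavy-tailedness is equivalent to the radius of convergence being exactly $1$, and its negation is equivalent to $F_{ii}$ having radius of convergence strictly greater than $1$.

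Next I would invoke Theorem~\ref{thm:geometric_convergence_general} with the petite set taken to be the singleton $C = \{i\}$, which is legitimate on a countable state space as noted above. For this choice the supremum in Part~\ref{thm:part_2_geometric_ergodicity_general} collapses to the single quantity $\mathds{E}_i[\gamma^{T_{ii}}] = F_{ii}(\gamma)$, and the geometric bound in the first condition reduces to $|p^n_{ii} - \pi_i| \le M_i \rho_i^n$, namely geometric ergodicity at state $i$. The theorem then asserts that geometric ergodicity holds if and only if there exists $\gamma > 1$ with $F_{ii}(\gamma) < \infty$, which is exactly the condition that the radius of convergence of $F_{ii}$ exceeds $1$. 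Combining this with the previous paragraph gives the equivalences: the chain is \emph{not} heavy-tailed is equivalent to the radius of convergence exceeding $1$, which is equivalent to geometric ergodicity, which is equivalent to geometric convergence to the stationary distribution. Taking the contrapositive yields the lemma, that $T_{ii}$ is heavy-tailed if and only if the convergence to stationarity is slower than geometric.

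The main obstacle will be the careful identification of ``slower than geometric'' with the negation of geometric ergodicity. I would argue that if geometric ergodicity fails then no bound of the form $c\rho^{n}$ with $0 \le \rho < 1$ can hold for the total variation distance, so the decay is slower than every geometric rate; conversely, if the convergence is slower than geometric then by definition no such bound exists and geometric ergodicity fails. I would also record, using irreducibility and the remark that the LRD property is independent of the representative state, that the radius of convergence of $F_{ii}$ and hence the heavy-tail property do not depend on the particular $i$ in the communicating class, so the statement is well posed as a property of the chain rather than of a single state. The remaining steps, rewriting the exponential integral of Definition~\ref{def:heavy_tail} as a sum and matching $z = \gamma = e^{t}$, are routine.
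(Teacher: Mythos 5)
Your proof is correct and follows essentially the same route as the paper's: you identify heavy-tailedness of $T_{ii}$ with the radius of convergence of $F_{ii}(z)$ being exactly $1$, and then invoke the Geometric Ergodic Theorem with singleton petite sets to equate a radius of convergence strictly greater than $1$ with geometric convergence to stationarity. If anything, your write-up is more complete than the paper's own proof, which stops at the radius-of-convergence claim and leaves the appeal to Theorem~\ref{thm:geometric_convergence_general}, the reduction to singleton petite sets, and the final contrapositive implicit in the discussion preceding the lemma.
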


\begin{proof}
	The definition of a heavy-tailed distribution, is equivalent the moment generating function, $E[e^{tT_{ii}}]$ being infinite for all $t > 0$ because a heavy tail from Definition~\ref{def:heavy_tail} is equivalent to having infinite moments~\cite[pg. 11]{foss2011introduction}. This implies that for any heavy-tailed return time distribution, that $F_{ii}(e^t) = \infty \ \forall \ t > 0 \iff \mathds{E}[e^{t T_{ii}}] = \infty, \ \forall \ t > 0$. For the probability generating function, $F_{ii}(z)$, $t > 0 \iff z > e^0 = 1$. Which implies that the radius of convergence is exactly 1.
\end{proof}

Hence, LRD Markov chains must converge more slowly than geometrically. We now use the knowledge of the moments of the return time to provide a convergence rate for LRD processes and  introduce similar results to geometric ergodicity but for general rate functions.


The direct analogue of the classification of geometric ergodicity for general rate functions is given by the following theorem. 

\begin{theorem}[Theorem 2.1~\cite{tuominen_tweedie_1994}]\label{thm:generic_rate_convergence}
	For an ergodic Markov chain, a function $f:S \rightarrow [1, \infty)$ and a rate function, $r(n):\mathbb{Z}^+ \rightarrow \mathbb{R^+}$, the following statements are equivalent:
	\begin{enumerate}
		\item There exists a petite set, $C$, such that
		\begin{align*}
		\sup_{i \in C} \mathds{E}_i \left[\sum_{k=0}^{T_C - 1}r(k)f(X_k)\right] < \infty,
		\end{align*}
		where $T_C$ is the return time to the set $C$ and $\mathds{E}_i[\cdot] = \mathds{E}[\cdot | X_0 = i]$.
		\item The sequence, $r(n)|| p^n_{i,\cdot} - \pi\left(\cdot\right)||_f \rightarrow 0$ as $n \rightarrow \infty$ for all $C$ such that
		\begin{align*}
		\sup_{i \in C} \mathds{E}_i \left[\sum_{k=0}^{T_B-1} r(k)f(X_k)\right] < \infty, 
		\end{align*}
		for all subsets $B \in \mathcal{F}$ where $T_B$ is the return time to subset $B$.
	\end{enumerate}
\end{theorem}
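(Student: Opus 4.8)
The plan is to prove the equivalence through the regenerative structure of the chain, reducing both statements to a single renewal-theoretic estimate on the excursions away from the petite set $C$. First I would invoke the Nummelin splitting construction: because $C$ is petite there is a minorisation $K_a(i,\cdot)\ge\nu(\cdot)$ on $C$, and one may split the chain to manufacture an accessible atom $\alpha$, a state from which successive returns are genuine i.i.d.\ regeneration epochs. This turns the problem into the study of a renewal process whose inter-arrival law is the return-time distribution to $\alpha$, carrying the accumulated cost $\sum_k r(k)f(X_k)$ over an excursion. The first reduction step is to show that the petite-set moment condition in statement~(1) is equivalent to finiteness of the corresponding weighted excursion moment for $\alpha$; this holds because excursions from $C$ and from $\alpha$ dominate one another up to the number of visits to $C$ between regenerations, which is itself geometrically controlled.

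For the implication $(1)\Rightarrow(2)$ I would run a coupling argument. Construct two copies of the split chain, one started from $X_0=i$ and one from the stationary distribution $\pi$, let them evolve until they regenerate at $\alpha$ at a common epoch, and couple them thereafter. The coupling inequality then bounds $\|p^n_{i,\cdot}-\pi(\cdot)\|_f$ by a tail quantity of the form $\mathds{E}_i\big[\sum_{k\ge n}f(X_k)\mathbbm{1}_{\{\text{not yet coupled}\}}\big]$, where the coupling time is governed by the regeneration time to $\alpha$. The heart of the argument is a subgeometric renewal theorem showing that $r(n)$ times this tail tends to zero precisely when the weighted excursion moment of statement~(1) is finite; here the regularity of the rate function $r$, namely that it is subgeometric ($\log r(n)/n\to 0$) and satisfies a submultiplicativity-type bound, is what makes the renewal convolution estimate close.

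For the converse $(2)\Rightarrow(1)$ I would run the renewal identity in reverse. The $n$-step return probability to the atom obeys the renewal equation, so decay of $p^n_{\alpha\alpha}-\pi_\alpha$ at rate $r$ pins down the tail $\mathds{P}(T_\alpha>n)$; summing $\sum_n r(n)\mathds{P}(T_\alpha>n)$ then yields finiteness of the weighted excursion moment for $\alpha$, which by the excursion-domination step transfers back to $\sup_{i\in C}\mathds{E}_i[\sum_{k=0}^{T_C-1}r(k)f(X_k)]$. Kac's formula, expressing $\pi$ as the normalised expected occupation measure over one excursion from $\alpha$, is what links the $f$-weighted transition bound to the $f$-weighted excursion sum.

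The main obstacle I anticipate is the subgeometric renewal theorem underpinning the coupling bound. Unlike the geometric case, where the return-time generating function has radius of convergence strictly greater than one and exponential bounds follow routinely, here $r$ grows sub-exponentially and the renewal equation $u=\delta+u*F$ must be analysed through the cumulative-rate sums $\sum_k r(k)\mathds{P}(T_\alpha>k)$ rather than through a spectral gap. Ensuring that these sums transfer correctly across the convolution, which is exactly where the submultiplicative regularity of $r$ is essential and where a naive geometric argument fails, is the delicate technical step, and it is precisely the content that distinguishes this general-rate result from Theorem~\ref{thm:geometric_convergence_general}.
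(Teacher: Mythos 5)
First, a point of order: the paper never proves this statement --- it is imported verbatim as Theorem~2.1 of Tuominen and Tweedie~\cite{tuominen_tweedie_1994} and used as a black box, so the only meaningful comparison is with the proof in the cited source. Your outline reconstructs that source's architecture quite faithfully for the forward direction: Nummelin splitting of the petite set to manufacture an atom (legitimate, since petite sets are small for aperiodic ergodic chains), transfer of the weighted excursion moment between $C$ and the atom, Kac's occupation-measure formula to connect $\pi$ with excursion sums, and a subgeometric renewal estimate in place of a spectral gap; Tuominen and Tweedie run a first-entrance--last-exit decomposition where you propose coupling, but these are interchangeable once the renewal theorem of Nummelin and Tuominen is in hand, and you correctly identify that $r$ must lie in the subgeometric class (a hypothesis the paper's rendition of the theorem silently omits).

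The genuine gap is in your converse $(2)\Rightarrow(1)$. You propose to read the tail $\mathds{P}(T_\alpha>n)$ off the renewal equation from the decay of the scalar sequence $u_n=p^n_{\alpha\alpha}$ and then sum $\sum_n r(n)\mathds{P}(T_\alpha>n)$. But pointwise decay $r(n)\left(u_n-\pi_\alpha\right)\rightarrow 0$ does not imply $\sum_n r(n)\mathds{P}(T_\alpha>n)<\infty$: the two can differ by a slowly varying factor. Concretely, take $r(n)=n^\beta$ with $\beta\in(0,1)$ and a return time with $\mathds{P}(T_\alpha>n)\sim n^{-(1+\beta)}/\log n$; then
\begin{align*}
\sum_{n} r(n)\,\mathds{P}(T_\alpha>n) \;=\; \sum_n \frac{1}{n\log n} \;=\; \infty,
\end{align*}
while standard heavy-tail renewal asymptotics give $u_n-\pi_\alpha \sim \pi_\alpha^2 \sum_{k>n}\mathds{P}(T_\alpha>k) \sim c\, n^{-\beta}/\log n$, so that $r(n)(u_n-\pi_\alpha)\rightarrow 0$ nonetheless. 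The converse renewal results you would need require the summed control $\sum_n r(n)\,|u_n-\pi_\alpha|<\infty$, not the pointwise statement you can extract. What saves the theorem is that statement (2) is much stronger than decay of a single renewal sequence --- note its quantifier already builds in $(f,r)$-regularity of the initial states --- and the converse in Tuominen--Tweedie is structural rather than analytic: it passes through the equivalence with a drift-type condition and the decomposition of the set of $(f,r)$-regular points into a countable union of $(f,r)$-regular sets, from which a petite one of positive stationary measure is extracted; the renewal equation is never inverted. As written, your converse step fails and needs to be replaced by that regularity/drift route.
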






We have shown in Lemma~\ref{infinite_moment}, that any infinite moment of the return time distribution implies that the Markov chain cannot converge geometrically. Next, we show that we can link the maximum convergence rate of the Markov chain to the supremum of finite moments of the return time, under Condition~\ref{cond:power_law_ccdf}. 



To use the first part of Theorem~\ref{thm:generic_rate_convergence}, we require a lemma to that when $f$ is the constant function equal to 1, \emph{i.e.,} $f=1$ that $\mathds{E}_i\left[\sum_{k=0}^{T_{ii} - 1} r(k)\right] < \infty$ is equivalent to an easier to analyse expression to for power law decay. The behaviour of the return time random variable is a property of the communication class, and therefore for an irreducible Markov chain the power-law behaviour of the return time to a state $i$ applies to all states.

\begin{lemma}\label{alternate_condition}
	For an ergodic Markov chain where $\mathds{P}(T_{ii} > n) \sim n^{-\alpha}$,
	\begin{align*}
	\mathds{E}_i\left[\sum_{k=0}^{T_{ii} - 1} r(k)\right] < \infty,
	\end{align*}
	if and only if 
	\begin{align*}
	\sum_{k=1}^{\infty} r(k) k^{-\alpha} < \infty.
	\end{align*}
\end{lemma}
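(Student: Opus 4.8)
The plan is to convert the expectation of the random sum into an ordinary series of tail probabilities $\mathds{P}(T_{ii} > k)$, and then to reduce the question to a comparison test against $\sum_{k=1}^\infty r(k) k^{-\alpha}$.

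First I would condition on the value of the return time. Since $T_{ii} \ge 1$ always, writing the expectation as a sum over the possible values $m \in \{1, 2, \dots\}$ of $T_{ii}$ gives
\begin{align*}
\mathds{E}_i\left[\sum_{k=0}^{T_{ii}-1} r(k)\right] = \sum_{m=1}^\infty \mathds{P}(T_{ii} = m) \sum_{k=0}^{m-1} r(k).
\end{align*}
Because $r$ takes nonnegative values, every term is nonnegative, so Tonelli's theorem licenses interchanging the two sums. A fixed $r(k)$ is counted precisely for those $m$ with $m \ge k+1$, so the summation over $m$ collapses to $\mathds{P}(T_{ii} > k)$, yielding the key identity
\begin{align*}
\mathds{E}_i\left[\sum_{k=0}^{T_{ii}-1} r(k)\right] = \sum_{k=0}^\infty r(k)\, \mathds{P}(T_{ii} > k).
\end{align*}

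Next I would compare this series with $\sum_{k=1}^\infty r(k) k^{-\alpha}$. The $k=0$ term of the identity equals $r(0)\,\mathds{P}(T_{ii} > 0) = r(0)$, a finite constant that cannot affect convergence and may be discarded; this also explains why the target series begins at $k=1$, where $k^{-\alpha}$ is well defined. The hypothesis $\mathds{P}(T_{ii} > k) \sim k^{-\alpha}$ means $\mathds{P}(T_{ii} > k)/k^{-\alpha} \to 1$, so there exist an index $N$ and constants $0 < c_1 \le c_2 < \infty$ with $c_1 k^{-\alpha} \le \mathds{P}(T_{ii} > k) \le c_2 k^{-\alpha}$ for all $k \ge N$. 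The finitely many terms with $1 \le k < N$ contribute only a finite amount to each series, so by the comparison test the two tails converge or diverge together, giving
\begin{align*}
\mathds{E}_i\left[\sum_{k=0}^{T_{ii}-1} r(k)\right] < \infty \iff \sum_{k=1}^\infty r(k) k^{-\alpha} < \infty,
\end{align*}
as claimed.

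The argument is routine once the tail-sum identity is established; the only points needing care are the bookkeeping at the boundary (the $k=0$ term on one side, the start at $k=1$ on the other) and the passage from the asymptotic relation $\sim$ to two-sided constant bounds valid for all large $k$. I expect this last conversion to be the main place where precision matters, since $\sim$ only constrains the behaviour for large $k$, and the comparison must be phrased so as to be insensitive to the finitely many small-$k$ terms.
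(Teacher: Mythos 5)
Your proof is correct and follows essentially the same route as the paper: rewrite the expectation by conditioning on the value of $T_{ii}$, apply Tonelli's theorem to obtain the tail-sum identity $\sum_{k} r(k)\,\mathds{P}(T_{ii} > k)$, and compare with $\sum_{k} r(k) k^{-\alpha}$ via the asymptotic hypothesis. You are in fact somewhat more careful than the paper, which leaves the final comparison step and the $k=0$ boundary term implicit.
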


\begin{proof}
	Note that
	\begin{align*}
	\mathds{E}_i\left[\sum_{k=0}^{T_{ii} - 1} r(k)\right] &= \sum_{n=1}^{\infty} \left(\sum_{k=0}^{n - 1} r(k)\right) \mathds{P}(T_{ii} = n)\\
	&= \sum_{n=1}^{\infty} r(n) \mathds{P}(T_{ii} > n),
	\end{align*}
	where the second equality follows by Tonelli's theorem for a positive random variable.
\end{proof}



With this, we can state the next result, Lemma~\ref{lem:power_law_markov_chain_convergence}, that the rate of convergence, via the exponent of a power-law, is dependent on the existence of a corresponding moment of the return time random variable.

\begin{lemma}\label{lem:power_law_markov_chain_convergence}
	The rate of convergence of the n-step transition probabilities to the stationary distribution of a Markov chain with Condition~\ref{cond:power_law_ccdf}, is $O(n^{1-\alpha})$. That is, for any $0 < \beta < \alpha - 1$
	\begin{align*}
	n^\beta || p^n_{i, \cdot} - \pi\left(\cdot\right) ||_{TV} \rightarrow 0.
	\end{align*}
\end{lemma}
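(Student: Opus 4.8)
The plan is to apply the general subgeometric convergence theorem, Theorem~\ref{thm:generic_rate_convergence}, with the test function $f \equiv 1$ (so that the $f$-norm collapses to the total variation norm, as noted in the preliminaries) and the polynomial rate function $r(n) = n^\beta$ for an arbitrary fixed $0 < \beta < \alpha - 1$. Since the state space is countable, each singleton $\{i\}$ is a petite set, so I would take $C = \{i\}$; the return time $T_C$ to this set is then exactly the return time $T_{ii}$. The entire argument reduces to verifying the first (drift-type) condition of Theorem~\ref{thm:generic_rate_convergence} for this choice, after which the equivalence with the second condition delivers precisely the claimed statement $n^\beta || p^n_{i,\cdot} - \pi(\cdot) ||_{TV} \to 0$.

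First I would reduce the verification of the first condition to a summability criterion using Lemma~\ref{alternate_condition}. With $f \equiv 1$ the relevant expectation is $\mathds{E}_i[\sum_{k=0}^{T_{ii}-1} r(k)]$, and Lemma~\ref{alternate_condition} shows this is finite if and only if $\sum_{k=1}^\infty r(k) k^{-\alpha} < \infty$. For $r(k) = k^\beta$ this is the series $\sum_{k=1}^\infty k^{\beta - \alpha}$, a $p$-series that converges exactly when $\alpha - \beta > 1$, i.e.\ when $\beta < \alpha - 1$, which is our standing assumption. Hence the first condition holds. Invoking the equivalence in Theorem~\ref{thm:generic_rate_convergence} then yields $r(n) || p^n_{i,\cdot} - \pi(\cdot) ||_{TV} = n^\beta || p^n_{i,\cdot} - \pi(\cdot) ||_{TV} \to 0$. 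As $\beta < \alpha - 1$ was arbitrary, this is exactly the assertion that the convergence rate is $O(n^{1-\alpha})$ in the limiting-exponent sense.

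The main obstacle I anticipate is not analytic but a matter of confirming that $r(n) = n^\beta$ is an admissible rate function in the precise sense required by Theorem~\ref{thm:generic_rate_convergence}: the cited framework asks the rate to be (essentially) non-decreasing and subgeometric, $n^{-1}\log r(n) \to 0$, both of which a polynomial satisfies trivially, so I expect only to remark on this rather than prove anything substantial. A second point needing care is the passage from the hypothesis $\mathds{P}(T_{ii} > n) \sim n^{-\alpha}$, which is an asymptotic equivalence, to the convergence of $\sum_k r(k)\mathds{P}(T_{ii}>n)$: here the $\sim$ makes the general term comparable to $k^{\beta-\alpha}$ up to bounded multiplicative factors, so a direct comparison test transfers the convergence, and Lemma~\ref{alternate_condition} already packages this step. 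Everything else is bookkeeping, namely confirming that the singleton petite set's return time coincides with $T_{ii}$ and that $f \equiv 1$ identifies the $f$-norm with the total variation norm.
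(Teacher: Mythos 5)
Your proposal is correct and follows essentially the same route as the paper's own proof: apply Theorem~\ref{thm:generic_rate_convergence} with $f \equiv 1$ (so the $f$-norm reduces to total variation) and $r(n) = n^\beta$, then use Lemma~\ref{alternate_condition} to reduce the drift condition to the $p$-series $\sum_k k^{\beta-\alpha}$, which converges exactly when $\beta < \alpha - 1$. Your additional remarks on the singleton petite set, the admissibility of the polynomial rate function, and the comparison argument for the asymptotic equivalence $\mathds{P}(T_{ii} > n) \sim n^{-\alpha}$ make explicit some details the paper leaves implicit, but the argument is the same.
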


\begin{proof}
	From Theorem~\ref{thm:generic_rate_convergence}, we can show that the return time random variable in part 1 converges using the function $f = 1$. By part 3, we have that $r(n)|| p^n_{i,\cdot} - \pi||_{TV} \rightarrow 0$, since the total variation norm is the supremum of functions $g$ dominated by the constant 1,
	\begin{align*}
		||\mu - \pi||_{TV} &= \sup_{g: |g| \le 1} |\mu(g) - \pi(g)|.
	\end{align*}
	By Lemma~\ref{alternate_condition}, this is equivalent to showing conditions for
	\begin{align*}
		 \sum_{n=1}^{\infty} r(n) n^{-\alpha} < \infty,
	\end{align*}
	and we can apply Theorem~\ref{thm:generic_rate_convergence}, to show convergence.
	We can see that we require a function $r(n) = n^\beta$ such that, $\beta - \alpha < -1$, since any sum $\sum_{n=1}^{\infty} n^\gamma$, diverges for $\gamma \ge -1$.
	This implies that we require $\beta < \alpha - 1$ and also by Theorem~\ref{thm:generic_rate_convergence}, we require $\beta > 0$. So any rate between these two will converge.
\end{proof}

Condition~\ref{cond:power_law_ccdf} requires a return time with power law, \emph{e.g.} $\mathds{P}(T_{ii} > n) \sim n^{-\alpha}$, but there may be many values where this is true. We use
\begin{align*}
\alpha = \sup \left\{\delta : \mathds{E}[T_{ii}^\delta] < \infty \right\},
\end{align*}
which we call the moment index. From the previous discussion and Lemma~\ref{infinite_moment}, we can conclude that if all moments of the return time random variable exist, then the convergence to the stationary distribution is geometric. From Lemma~\ref{lem:power_law_markov_chain_convergence}, if there exist any infinite moments of the return time random variable and a power-law tail in the return time random variable, then the convergence is a power-law with exponent of the moment index minus 1, \emph{i.e.}, $\alpha - 1$.


This gives an interesting link between the convergence rate of an LRD Markov chain and the Hurst parameter. This is summarised in Theorem~\ref{thm:power_law_markov_chain_convergence}.

\begin{thm1}{1}
	Let $\mathcal{X} = \{X_n\}_{n \in \mathbb{Z}^+}$ be an LRD Markov chain with Condition~\ref{cond:power_law_ccdf}. Then, the rate of convergence of the $n$-step transition probabilities to the stationary distribution $\pi$ is $O(n^{2H-2})$. That is, for any $0 < \beta < H$ and $\forall i \in S$
	\begin{align*}
	n^{2-2\beta} || p^n_{i \cdot} - \pi\left(\cdot\right) ||_{TV} \rightarrow 0.
	\end{align*}
\end{thm1}

\begin{proof}[Proof of Theorem~\ref{thm:power_law_markov_chain_convergence}]
	From Carpio and Daley~\cite{carpio2007long} and Theorem 1 of Daley~\cite{daley1999hurst} we have that the Hurst parameter is linked to the moment index by the following relationship,
	\begin{align}
		H = \frac{1}{2} \left(3 - \alpha\right).\label{eq:hurst_exponent_relationship}
	\end{align}
	Since, the exponent $\alpha$ in the complementary cumulative distribution function represents the moment index in the distribution~\cite[pg. 32]{foss2011introduction}, and by rearranging (\ref{eq:hurst_exponent_relationship}) that $\alpha -1 = 2 - 2H$. Then the result follows by applying Lemma~\ref{lem:power_law_markov_chain_convergence}.
\end{proof}

This result echos previous work in the area of LRD, \emph{e.g.,}~\cite{beran1992statistical}, which shows that the convergence rate to important quantities is slower for LRD processes and is related to the Hurst parameter. Interestingly, this result illustrates that the convergence rate slows as the Hurst parameter tends to one, \emph{i.e.,} as the degree of LRD increases. As $H \rightarrow 1$ the exponent tends to $0$, and the moment index is close to 1 and at that stage the expectation of the return time is finite. If the moment index is $\le 1$, then the expectation becomes infinite, and the Markov chain is null-recurrent, \emph{i.e.}, $\mathds{E}[T_{ii}] = \infty$ and $\alpha = 1$. Intuitively, this indicates that the rate convergence slows with respect to the degree of LRD, and once the Hurst parameter becomes greater than 1, using the definition from the moment index, the rate of convergence slows to zero and the stationary distribution no longer exists.

\section{Convergence of conditional entropy to entropy rate}

In this section we discuss the convergence of the conditional entropy of a Markov chain to its entropy rate. We show that there is an equivalence between the rate of convergence of the $n$-step transition probabilities to the stationary distribution and the convergence of the conditional entropy to the entropy rate. From Carpio and Daley~\cite{carpio2007long} the convergence rate is a property of the entire communicating class, so any convergence rate in a particular state applies to all states in an ergodic chain. We demonstrate that the behaviour of LRD Markov chains is consistent with properties of other processes and conclude for ergodic Markov chains that LRD is characterised by slow convergence and an infinite amount of shared information between the past and future of the process. 

Next we state that the entropy rate of an ergodic Markov chain is the same as that of a stationary Markov chain. This aligns with the intuition of ergodic processes, that they ``forget" their initial state, and can be analysed using their asymptotic probabilities. We extend the result to a Markov chain starting from an arbitrary initial state, and omit the proof as it follows the same argument as Theorem 4.2.4 in Cover and Thomas~\cite{cover_thomas_2006}. We make one additional assumption, required for the calculation of the entropy rate over a countable set, that the conditional entropy given knowledge of which state the process is in, is finite; formally, that is 
\begin{align*}
	\mathds{H}\left[X_n|X_{n-1} = i\right] = - \sum_{j \in \Omega} p_{ij} \log p_{ij} < \infty.
\end{align*}
This is not an onerous assumption, since for interesting analysis we require that the entropy of a random variable is finite.

\begin{theorem}
	The entropy rate of an ergodic Markov Chain $\mathcal{X}$ is
	\begin{align*}
	\mathds{H}[\mathcal{X}] &= -\sum_{i} \sum_{j} \pi_i p_{ij} \log p_{ij}.
	\end{align*}
\end{theorem}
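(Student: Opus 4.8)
The plan is to follow the Cesàro-mean argument of Cover and Thomas (their Theorem 4.2.4), adapting it to a countable state space and to an arbitrary, not necessarily stationary, initial distribution. First I would apply the chain rule for joint entropy,
\[
\mathds{H}(X_0,\ldots,X_{n-1}) = \mathds{H}(X_0) + \sum_{k=1}^{n-1}\mathds{H}\!\left(X_k \mid X_{k-1},\ldots,X_0\right),
\]
and then invoke the Markov property to collapse each conditional term to a single step, $\mathds{H}(X_k \mid X_{k-1},\ldots,X_0) = \mathds{H}(X_k \mid X_{k-1})$. Writing $h_i := -\sum_{j} p_{ij}\log p_{ij}$ for the per-state conditional entropy, which is finite by the standing assumption, each term becomes the marginal average $\mathds{H}(X_k \mid X_{k-1}) = \sum_{i}\mathds{P}(X_{k-1}=i)\,h_i$.

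Next I would exploit ergodicity to control the marginal law of $X_{k-1}$. Since the chain is ergodic and positive recurrent, the distribution of $X_{k-1}$ converges to the stationary distribution $\pi$ in total variation as $k\to\infty$, regardless of the initial state. The goal of this step is to establish
\[
\mathds{H}(X_k \mid X_{k-1}) = \sum_i \mathds{P}(X_{k-1}=i)\,h_i \;\longrightarrow\; \sum_i \pi_i\,h_i = -\sum_i\sum_j \pi_i p_{ij}\log p_{ij}.
\]
Once the one-step conditional entropy is shown to converge to this value, the theorem follows from the elementary Cesàro lemma: if $a_k \to a$, then the running average $\tfrac{1}{n}\sum_{k} a_k \to a$. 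Applying this to the chain-rule decomposition, and noting that the isolated term $\mathds{H}(X_0)/n$ vanishes, yields $\tfrac{1}{n}\mathds{H}(X_0,\ldots,X_{n-1}) \to \sum_i \pi_i h_i$, which is precisely the claimed entropy rate.

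The main obstacle is the interchange of the limit with the infinite sum over the countable state space in the display above. On a finite state space this is immediate, since $h_i$ is bounded and total-variation convergence of $\mathds{P}(X_{k-1}=\cdot)$ to $\pi$ transfers directly to the weighted sums. On a countable space the per-state entropies $h_i$ need not be uniformly bounded, so total-variation convergence alone does not control $\sum_i \mathds{P}(X_{k-1}=i)\,h_i$. I would resolve this either by a dominated-convergence argument using the finiteness hypothesis, or, more cleanly, by upgrading total-variation convergence to convergence in an $f$-norm with $f \ge \max(1,h)$ dominating the per-state entropy and invoking Theorem~\ref{thm:generic_rate_convergence}; the $f$-norm was introduced earlier precisely for weighted convergence statements of this form. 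This domination step is the only place where the countable-state setting genuinely departs from the classical finite-state proof, which is why the authors can otherwise reuse the Cover and Thomas argument verbatim.
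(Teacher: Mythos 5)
Your proposal takes essentially the same route as the paper: the paper in fact omits this proof altogether, saying only that it ``follows the same argument as Theorem 4.2.4 in Cover and Thomas'', and that argument is exactly your decomposition --- chain rule, the Markov collapse $\mathds{H}(X_k \mid X_{k-1},\ldots,X_0)=\mathds{H}(X_k\mid X_{k-1})$, convergence of the time-$k$ marginals to $\pi$, and the Ces\`aro lemma. So in structure you and the paper agree completely.

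Where you go beyond the paper is in flagging the limit--sum interchange
\begin{align*}
\sum_i \mathds{P}(X_{k-1}=i)\,h_i \;\longrightarrow\; \sum_i \pi_i h_i, \qquad h_i := -\sum_j p_{ij}\log p_{ij},
\end{align*}
over the countable state space; the paper never discusses this, and your caution is justified. Be aware, however, that neither of your proposed repairs closes this step under the paper's stated hypotheses. The standing assumption is only \emph{per-state} finiteness $h_i<\infty$; this supplies no summable dominating sequence for $\mathds{P}(X_{k-1}=i)\,h_i$, so ``dominated convergence using the finiteness hypothesis'' is not available as stated --- what is actually needed is uniform integrability of $h$ under the marginals, which is automatic when $\sup_i h_i<\infty$ (e.g.\ finite state space) but not in general. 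Likewise, invoking Theorem~\ref{thm:generic_rate_convergence} with $f\ge\max(1,h)$ presupposes the moment condition $\sup_{i\in C}\mathds{E}_i\bigl[\sum_{k=0}^{T_C-1}f(X_k)\bigr]<\infty$, an additional hypothesis nowhere assumed in the theorem. This soft spot is as much the paper's as yours: in the proof of Lemma~\ref{lem:excess_infinite} the paper silently upgrades per-state finiteness to $\sup_j C_j < \infty$, which is the identical non sequitur. If you want a self-contained fix, a regeneration argument is cleaner than either of your suggestions: by the occupation-measure identity $\pi_i = \mathds{E}_0[\text{visits to } i \text{ before } T_{00}]/\mathds{E}_0[T_{00}]$ and the elementary renewal--reward theorem, the Ces\`aro average $\frac{1}{n}\sum_{k=1}^{n}\mathds{E}[h(X_{k-1})]$ converges to $\mathds{E}_0\bigl[\sum_{k=0}^{T_{00}-1}h(X_k)\bigr]/\mathds{E}_0[T_{00}] = \sum_i \pi_i h_i$ whenever the cycle reward has finite mean (and diverges, matching an infinite entropy rate, otherwise); this bypasses pointwise convergence of $\mathds{H}(X_k\mid X_{k-1})$ and the domination issue entirely.
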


\begin{proof}
	The proof is omitted as it follows the same argument as Theorem 4.2.4 in Cover and Thomas~\cite{cover_thomas_2006}.
\end{proof}

Therefore we have an explicit form for the entropy rate of a Markov chain that is ergodic, rather than just stationary and therefore is the same as under any distribution of the initial state of the chain. 

Next, by analysing the limit of the conditional entropy conditioned on the previous observations of the Markov chain, we show that the convergence to the entropy rate is equivalent to the convergence to the stationary distribution. This provides another perspective on long range dependence, that the convergence to the entropy rate, the average new information from a random variable of a stochastic process, is slower. This equivalence is given by Theorem~\ref{thm:stationary_entropy_rate_convergence}, and this is proved below.


\begin{thm1}{2}
	The convergence of the conditional entropy of an ergodic, positive recurrent Markov Chain to its entropy rate is at the same rate as the convergence to the stationary distribution.
\end{thm1}

\begin{proof}[Proof of Theorem~\ref{thm:stationary_entropy_rate_convergence}]
	Define the initial distribution as $\mathbf{\alpha} = \{\alpha_i\}_{i \in S}$, \emph{i.e.}, $\mathds{P}(X_1 = i) = \alpha_i$. The conditional entropy of $X_2$ given $X_1$ is
	\begin{align*}
		\mathds{H}[X_2 | X_1] &= \sum_{i} \sum_{j} \mathds{P}(X_1 = i, X_2 = j) \log \mathds{P}(X_2 = j | X_1 = i),\\
		&= \sum_{i} \sum_{j} \alpha_i p_{ij} \log p_{ij}.
	\end{align*}
	Considering the conditional entropy of $X_n$ given the full history, $X_1, \ldots, X_{n-1}$, 
	\begin{align*}
		\mathds{H}[X_n &| X_{n-1}, \ldots, X_0] \\
		&= \sum_{i_n} \ldots \sum_{i_0} \mathds{P}(X_0 = i_0, \ldots, X_n = i_n) \log \mathds{P}(X_n = i_n | X_{n-1} = i_{n-1}, \ldots, X_0 = i_0),\\
		&= \sum_{i_n} \ldots \sum_{i_0} \alpha_{i_0} p_{i_0i_n}^{n+1} \log p_{i_{n-1}i_n},\\
		&= \sum_{i_n} \sum_{i_{n-1}} p_{i_{n-1} i_n} \log p_{i_{n-1} i_n} \left(\sum_{i_{n-1}} \ldots \sum_{i_0} \alpha_i p_{i_0 i_{n-1}}^n\right),\\
		&= \sum_{i_n} \sum_{i_{n-1}} p_{i_{n-1}i_n} \log p_{i_{n-1} i_n} \left(\sum_{i_0} \alpha_i p_{i_0 i_{n-1}}^n
		\right), \numberthis \label{eqn:conditional_entropy_from_pre_limit}
	\end{align*}
	where we can factor out the path from the initial state to the second to last state due to the Markov property in the last equality. Note that the term $p_{i_{n-1} i_n} \log p_{i_{n-1} i_n}$ quantifies the information contained in the transitions. As $n \rightarrow \infty$, $\sum_{i} \alpha_i p_{i,j}^n \rightarrow \pi_j$, since a positive recurrent chain has a stationary distribution and by the ergodicity of the chain, it converges to the stationary distribution from any state. 
	Taking the limit of Equation~(\ref{eqn:conditional_entropy_from_pre_limit}) shows that the convergence of the conditional entropy to the entropy rate depends on the rate of convergence of $\sum_{i} \alpha_i p_{ij}^n \rightarrow \pi_j$.
\end{proof}

Theorem~\ref{thm:stationary_entropy_rate_convergence} tells us that the convergence rate of the conditional entropy to the entropy rate is the same as convergence of the $n$-step transition probabilities to the stationary distribution. This shows that the convergence rate of other quantities for Markov chains are intimately connected to the convergence rate of the stationary distribution. 

A definition of LRD, suggested in Li~\cite{Li_2004}, is that the mutual information between past and future is infinite. This has been shown to be true for Gaussian processes~\cite{feutrill2021differential}, and we show it holds for Markov chains as an extension of the following lemma. We show that in the case of LRD the excess entropy is infinite, by the limits of the quantities, $Q_{ij}^n = \sum_{r=1}^{n} \left(p_{ij}^r - \pi_j\right)$, for $i,j \in \Omega$. Carpio and Daley~\cite{carpio2007long} used $Q_{ij}^n$ to show that the state space must be infinite in the case of LRD. We use it to illustrate the slow convergence, and to reinforce an entropic perspective on LRD. 

\begin{lemma}\label{lem:excess_infinite}
	A countable state Markov chain is LRD if and only if the excess entropy is infinite.
\end{lemma}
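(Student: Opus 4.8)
The plan is to reduce the excess entropy to an $h$-weighted sum of the limiting deviations $Q_{ij}^n$ and then invoke the characterisation of LRD through these quantities due to Carpio and Daley~\cite{carpio2007long}. Write $h_j = -\sum_{k} p_{jk}\log p_{jk}$ for the one-step conditional entropy out of state $j$, which is finite by the standing assumption. Fixing the initial state at some $i$ (legitimate because both the LRD property and the convergence behaviour are properties of the whole communicating class, hence independent of the starting state), equation~(\ref{eqn:conditional_entropy_from_pre_limit}) from the proof of Theorem~\ref{thm:stationary_entropy_rate_convergence} together with the entropy-rate formula yields the term-by-term identity
\begin{align*}
\mathds{H}[X_n \mid X_{n-1}, \ldots, X_0] - \mathds{H}(\mathcal{X}) = \sum_{j} h_j\left(p_{ij}^{n} - \pi_j\right).
\end{align*}
Since conditioning cannot increase entropy each gap is non-negative, so $E \in [0,\infty]$ is a sum of non-negative terms and the only question is its finiteness.

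Summing over $n$ and interchanging the order of summation would give
\begin{align*}
E = \sum_{j} h_j \sum_{n=1}^{\infty}\left(p_{ij}^{n} - \pi_j\right) = \sum_{j} h_j \lim_{N\to\infty} Q_{ij}^{N},
\end{align*}
exhibiting the excess entropy as an $h_j$-weighted sum of the limiting deviations. I would then quote Carpio and Daley~\cite{carpio2007long}, who show that the chain is LRD precisely when these partial sums diverge, equivalently $\sum_{r=1}^{\infty}(p_{ij}^r-\pi_j)=+\infty$ for the states of the class (which in turn follows from the infinite second moment of $T_{ii}$). In the short-range case every $\lim_N Q_{ij}^N$ is finite and, combined with $\sum_j h_j \pi_j = \mathds{H}(\mathcal{X})<\infty$, the weighted sum converges, so $E<\infty$; in the LRD case the diagonal deviation $Q_{ii}^N\to+\infty$ ought to force $E=\infty$.

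The main obstacle is controlling the interchange of summation in the mixed-sign setting and ruling out cancellation in the state sum. The deviations obey the conservation law $\sum_j Q_{ij}^N = 0$ for every $N$ (because $\sum_j p_{ij}^n = \sum_j\pi_j = 1$), so a divergent diagonal term is necessarily matched by off-diagonal terms tending to $-\infty$, and one must show these do not balance once weighted by $h_j$. I would handle this through the non-negative, monotone partial sums $E_N = \sum_{n=1}^{N}\bigl(\mathds{H}[X_n\mid X_{n-1},\ldots,X_0]-\mathds{H}(\mathcal{X})\bigr) = \sum_j h_j Q_{ij}^{N}$ and monotone convergence, together with the rate equivalence of Theorem~\ref{thm:stationary_entropy_rate_convergence}: since the conditional-entropy gap decays at the same rate as $\|p^n_{i\cdot}-\pi\|_{TV}$, summability of the gaps is equivalent to summability of the total-variation distances, and the latter fails exactly for LRD chains. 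Turning this rate equivalence into the clean summability dichotomy, and justifying the rearrangement rigorously, is the delicate step; the remaining ingredients are immediate or quoted from Carpio and Daley~\cite{carpio2007long}.
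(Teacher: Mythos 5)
Your reduction of the excess-entropy partial sums to the weighted deviations $E_N = \sum_j h_j Q_{ij}^N$ is exactly the paper's starting point, and like the paper you want to finish by quoting Carpio and Daley's characterisation of LRD through divergence of $Q_{ij}^n$. The problem is the step you yourself flag as delicate: ruling out cancellation in the $j$-sum. Your first tool for this, non-negativity of the individual gaps via ``conditioning cannot increase entropy,'' is wrong in this setting. By the Markov property, $\mathds{H}[X_n \mid X_{n-1},\ldots,X_0] = \mathds{H}[X_n \mid X_{n-1}] = \sum_j \mathds{P}(X_{n-1}=j)\,h_j$, so the gap equals $\sum_j h_j\bigl(\mathds{P}(X_{n-1}=j)-\pi_j\bigr)$, which has no definite sign when the chain starts from a fixed state (take an initial state whose accessible successors have unusually small one-step entropies $h_j$). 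The monotonicity you are implicitly invoking requires stationarity --- it is shift-invariance that upgrades ``conditioning reduces entropy'' to ``the conditional entropies decrease to the entropy rate from above'' --- and if you actually run the chain from $\pi$, then every gap is identically zero by the same Markov-property computation, so non-negativity cannot power a monotone-convergence argument here. Your second tool, the rate equivalence of Theorem~\ref{thm:stationary_entropy_rate_convergence}, only restates that the gap decays like $\| p^n_{i\cdot} - \pi \|_{TV}$; it does not convert divergence of the single diagonal quantity $Q_{ii}^N$ into divergence of the signed, weighted sum $\sum_j h_j Q_{ij}^N$, and you concede this step is left open.

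The paper closes precisely this hole with a tool you never invoke: Lemma 3 of Carpio and Daley~\cite{carpio2007long}, a solidarity result stating that the deviations grow at a common rate across all state pairs, $Q_{ij}^n/\pi_j \sim f(n)$ for a single rate function $f$. Substituting this termwise asymptotic into the weighted sum gives $E(N) \sim f(N)\sum_j \pi_j h_j = f(N)\,\mathds{H}(\mathcal{X})$, a sum of asymptotically same-sign terms, so finiteness of the excess entropy is equivalent to boundedness of $f$, hence of $Q_{ij}^n$, hence to short-range dependence. (The paper also brackets an arbitrary initial distribution between two fixed-initial-state chains before applying this.) Without that solidarity lemma --- or some substitute argument showing the $h_j$-weighted deviations cannot cancel even though $\sum_j Q_{ij}^N = 0$ for every $N$ --- your proposal does not go through: the conservation law you correctly identified defeats both of the mechanisms you offer for dealing with it.
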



\begin{proof}
	We begin by considering an arbitrary term in the excess entropy sum and simplifying, which gives
	\begin{align*}
		\mathds{H}[X_n | X_{n-1}, \ldots, X_0] - \mathds{H}[\mathcal{X}] 
		&= - \sum_{i} \sum_{j} p_{ij} \log p_{ij} \left(\sum_{k} \alpha_k p_{ki}^n -  \pi_i\right).
	\end{align*}
	Then we consider partial sums up to order $n$ to get the excess entropy, 
	\begin{align*}
		E(n) &= \sum_{r = 1}^{n} \mathds{H}[X_r | X_{r-1}, \ldots, X_1] - \mathds{H}[\mathcal{X}],\\
		&= \sum_{r=1}^{n} \left(- \sum_{i} \sum_{j} p_{ij} \log p_{ij} \left(\sum_{k} \alpha_k p_{k, i}^r -  \pi_i\right)\right)\\
		&= - \sum_{i} \sum_{j} p_{ij} \log p_{ij} \sum_{r = 1}^{n} \left(\sum_{k} \alpha_k p_{k, i}^r -  \pi_i\right),
	\end{align*}
	where we can swap the order of terms due to the finite sum.
	
	Note that since we have a weighted average of the $n$-step transition probabilities, there exist some initial states that will be either larger or smaller than the average. So we can bound any initial distribution using $Q_{ij}^n$ for some arbitrary states, $i,j \in S$. Therefore, $\exists s,t \in S, \forall n$,
	\begin{align*}
		\sum_{r=1}^n p_{sj}^ n - \pi_j &\le \sum_{r=1}^n \sum_{k} \alpha_k p_{kj}^n - \pi_j \le \sum_{r=1}^n p_{tj}^ n - \pi_j,\\
		\intertext{and hence } 
		Q_{ij}^n &\sim \sum_{r=1}^n \sum_{k} \alpha_k p_{kj}^n - \pi_j.
	\end{align*}
	By Lemma 3 of Carpio and Daley~\cite{carpio2007long}, we have that the rate of growth is asymptotically the same for all combinations of states. Therefore calling this rate function, $f(n)$ gives
	\begin{align*}
		\frac{Q_{ij}^n}{\pi_j} &\sim f(n),\\
		\intertext{and hence} 
		\sum_{r=1}^{n}\left(\sum_k \alpha_k p_{kj}^n - \pi_j\right) &\sim f(n) \pi_j.
	\end{align*}
	Then applying this to $E(n)$, we obtain
	\begin{align*}
		E(n) 
		&\sim - \sum_{i} \sum_{j} p_{ij} \log p_{ij} \pi_j f(n) = - f(n) \sum_{j} \pi_j \sum_{i} p_{ij} \log p_{ij}.
	\end{align*}
	Define $C_j = -\sum_{i \in S} p_{ij} \log p_{ij} < \infty, \forall j$. Then there exists a $C = \sup C_j < \infty$, which implies
	\begin{align*}
		E(n) &\sim f(n)\sum_{j \in S} \pi_j C_j.
	\end{align*}
	Therefore,
	\begin{align*}
		\lim\limits_{n \rightarrow \infty} E(n) < \infty \iff \lim\limits_{n \rightarrow \infty} f(n) < \infty \iff \lim\limits_{n \rightarrow \infty} Q_{ij}^n < \infty.
	\end{align*}
\end{proof}

This result leads to the final theorem, which is the same as has been shown for Gaussian processes by Feutrill and Roughan~\cite{feutrill2021differential}.

\begin{thm1}{3}
	The mutual information between past and future of a Markov chain is infinite if and only if the Markov chain is LRD.
\end{thm1}

\begin{proof}[Proof of Theorem~\ref{thm:mutual_infinite}]
	Theorem~\ref{lem:excess_infinite} and Proposition 8 of Crutchfield and Feldman~\cite{crutchfield_feldman_2003} imply the result.
\end{proof}


\section{Conclusion}

We have shown that an alternate perspective of LRD, using information theoretic measures, applies in the case of LRD Markov chains. That is, that LRD Markov chains are those that share infinite information between the infinite future and infinite past. This supports the notion that the definition of LRD for Markov chains exhibits the ``right" behaviour. 

As is common with other definitions of LRD, it is characterised by slow convergence to quantities, such as sample mean, and we have shown in this section that this behaviour extends to an information-theoretic quantity of stochastic processes, the entropy rate. Most LRD processes that have been developed are defined on a continuous state space, so this is the first extension that the authors are aware of that analyse the information theoretic properties of LRD processes on discrete spaces. The behaviour is driven by the return time random variable, in particular its infinite second moment, and therefore this is the simplest discrete valued model where LRD behaviour occurs. 



\section*{Acknowledgement}
\noindent We wish to thank ACEMS, CSIRO/Data61 and Defence Science and Technology Group for supporting this work. Thanks to Giang Nguyen for her advice and editing of this manuscript.



%
%
%
%

\bibliographystyle{APT}
\bibliography{myBibliography}

\end{document}